\newcommand{\ve}{\varepsilon}
\newcommand{\R}{\mathbb{R}}
\newcommand{\N}{\mathbb{N}}
\DeclareMathOperator{\vol}{vol}
\DeclareMathOperator{\ric}{Ric}
\DeclareMathOperator{\diam}{diam}
\begin{document}

\title{Integral Curvature Bounds and Betti Numbers}
\author{Runze Yu}
\address{Department of Mathematics, University of California, Los Angeles}
\email{yurunze2023@ucla.edu}
\subjclass{53B20, 53C20, 53C21, 53C23,
58A14, 58C05, 58C35}
\keywords{Curvature operator, Betti numbers, Sobolev inequality, Bochner technique}
\begin{abstract}
    We introduce an upper bound of the Betti numbers of a compact Riemannian manifold given integral bounds on the average of the lowest eigenvalues of the curvature operator. We then establish a new curvature condition for the Betti numbers to vanish using the Bochner technique. This generalizes results from Gallot and Petersen-Wink.
\end{abstract}

\maketitle

\newtheorem{theorem}{Theorem}[section]
\newtheorem{lemma}[theorem]{Lemma}
\newtheorem{corollary}[theorem]{Corollary}
\newtheorem{remark}[theorem]{Remark}
\newtheorem{prop}[theorem]{Proposition}

\section{Introduction}
\subsection{Introduction}
The Bochner technique is concerned with the relationship between the curvature of a Riemannian manifold and its geometric and topological properties \cite{bochner}. The first applications of the Bochner technique are by Meyer \cite{meyer} and Berger \cite{berger} who proved vanishing results of Betti numbers on manifolds with positive curvature operator, and Meyer further showed that they are rational cohomology spheres. 

More recent works focused on the implication of integral curvature bounds. Gallot \cite{gallot} established finiteness and vanishing results when the norm of the negative part of the lowest eigenvalue of the Ricci curvature is bounded. Petersen-Wink \cite{Petersen} proved similar results using pointwise bound on the average of eigenvalues of the curvature operator. Additionally, Aubry established finiteness of $\pi_1$ and an upper bound of the diameter of a manifold in \cite{Aubry} using an integral bound of the lowest eigenvalue of the Ricci curvature pinched below a positive constant. Furthermore, when the diameter is also bounded from below, Aubry \cite{aubry2} proved that such manifolds are homeomorphic to $\mathbb S^n$. Using the same integral bound on the pinching of Ricci curvature, Petersen-Wei established results of volume-comparison in \cite{PeterWei}.

Specifically in this paper we study how integral lower bounds of the curvature impose finiteness and vanishing conditions on the Betti numbers of a Riemannian manifold. The result in positive curvature case is well-known, see for example \cite{PeterBook}. More generally, Gallot proposed in \cite{gallot} two prominent questions:
\begin{enumerate}
    \item The Finiteness Theorem. If the negative part of the Riemannian curvature is controlled, what bound does it give to the Betti numbers? 
    \item The Pinching Theorem. If the diameter of the manifold $M$ is bounded and the Riemannian curvature goes a little below zero, when will $b_k(M)\le b_k(\mathbb T^n)$ for $1\le k\le n,$ where $\mathbb T^n$ is the $n$-dimensional torus? 
\end{enumerate}
Both integral bounds and pointwise bounds have been studied for these questions. Given a bound $D$ on the diameter of an $n$-dimensional Riemannian manifold $(M,g)$, Gallot in \cite{gallot} showed that $b_1(M)\le b_1(\mathbb T^n)$ when the negative part of the Ricci curvature is bounded and sufficiently small in an integral sense. Furthermore, if the integral of the sectional curvature $\sigma$ is also bounded and sufficiently small, then $b_i(M)\le b_i(\mathbb T^n)$ for all $i.$ 

Petersen-Wink in \cite{Petersen} showed that $$b_1(M)=\cdots=b_k(M)=b_{n-k}(M)=\cdots =b_{n-1}(M)=0$$ in an $n$-dimensional Riemannian manifold $M$ with an $(n-k)$-positive Riemannian curvature for any $k\le \lfloor n/2\rfloor.$ Furthermore, if there is a pointwise bound on the average of lowest $(n-k)$ eigenvalues of the curvature operator and a bound $D$ on the diameter of the manifold, then $k$-th Betti number is bounded from above.

In this paper we generalize both \cite{gallot} and \cite{Petersen} by considering integral bounds on the average of lowest eigenvalues of the curvature tensor. Our first theorem demonstrates a bound on the $k$-th Betti number given an integral bound on the average of the lowest eigenvalues of the curvature tensor.
\begin{theorem}
\normalfont 
\label{main}
Let $(M,g)$ be a compact Riemannian manifold of dimension $n$ satisfying the Sobolev inequality:
$$
\|f\|_{2\nu} \le \|f\|_2+S\|df\|_2
$$
for some $\nu>1$ and the Sobolev constant $S.$ Let $1\le k\le n.$ Let the eigenvalues of the curvature operator $\mathcal R:\bigwedge^2TM\to\bigwedge^2TM$ be 
$$\lambda_1\le \lambda_2\le\cdots\le\lambda_{\binom n2}$$
and for some integer $1\le C\le \binom n2$ set 
$$\tilde\lambda=\dfrac1C\sum_{i=1}^C\lambda_i:M\to \R.$$
Let $\lambda=\|\tilde\lambda\|_{p'}$ and $p$ be the H\"older conjugate of $p',$ and $p\in(1,\nu).$ Then 
$$b_k(M)\le \binom nk\exp\bigl( \dfrac{Sp\sqrt{\lambda\nu}}{\sqrt{\nu}-\sqrt p}\bigr)$$
for any $p\in(1,\nu).$
\end{theorem}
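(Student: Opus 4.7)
My plan is the three-stage Bochner--Moser--counting argument developed by Gallot and refined by Petersen--Wink. First, for a harmonic $k$-form $\omega$, the Weitzenböck formula reads
$$\tfrac12\Delta|\omega|^2 = |\nabla\omega|^2 + \langle\mathfrak R_k\omega,\omega\rangle,$$
where $\mathfrak R_k$ is the curvature endomorphism on $\bigwedge^k T^*M$ induced by $\mathcal R$. I would invoke a Petersen--Wink-type algebraic lemma, generalized to the averaging parameter $C$ of the hypothesis, to produce a pointwise estimate $\langle\mathfrak R_k\omega,\omega\rangle\ge c_{n,k,C}\,\tilde\lambda\,|\omega|^2$ with an explicit dimensional constant. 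Combining with Kato's inequality $|\nabla|\omega||\le|\nabla\omega|$ and writing $u=|\omega|$ gives the distributional scalar inequality $\Delta u \ge c_{n,k,C}\,\tilde\lambda\,u$.

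Next, I would run a Moser iteration on this inequality. Normalize $\|u\|_2=1$. For each $q\ge 1$, test against $u^{2q-1}$ and integrate by parts to control $\|\nabla u^q\|_2^2$ by $\int\tilde\lambda\,u^{2q}$. Hölder with the conjugate pair $(p,p')$ bounds this by $\lambda\|u^q\|_{2p}^2$, while the Sobolev hypothesis gives $\|u^q\|_{2\nu}\le\|u^q\|_2+S\|du^q\|_2$. Because $p\in(1,\nu)$, the intermediate norm $\|u^q\|_{2p}$ can be interpolated between $\|u^q\|_2$ and $\|u^q\|_{2\nu}$. Assembled, these give a reverse-Hölder recursion of the form $\|u\|_{2q\nu/p}\le\Phi(q)^{1/q}\|u\|_{2q}$. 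Iterating $q\mapsto q\nu/p$, the logarithms of the constants $\Phi(q_j)^{1/q_j}$ assemble into a geometric series of ratio $\sqrt{p/\nu}$ and telescope to
$$\|u\|_\infty^2 \le \exp\!\Bigl(\tfrac{Sp\sqrt{\lambda\nu}}{\sqrt\nu-\sqrt p}\Bigr)\|u\|_2^2$$
(up to absorbing an implicit volume factor into the norms, as in Gallot).

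Finally, I count. Let $\omega_1,\dots,\omega_m$ be an $L^2$-orthonormal basis of harmonic $k$-forms, so $m=b_k(M)$. At each $x\in M$ the pointwise evaluation map $\mathrm{Harm}^k(M)\to\bigwedge^k T_x^*M$ has rank at most $\binom{n}{k}$; an orthogonal change of basis on $\mathrm{Harm}^k(M)$ yields a new orthonormal basis in which at most $\binom{n}{k}$ forms are nonzero at $x$. Applying the previous $L^\infty$--$L^2$ bound to each of these gives $\sum_i|\omega_i(x)|^2\le\binom{n}{k}\exp\bigl(\tfrac{Sp\sqrt{\lambda\nu}}{\sqrt\nu-\sqrt p}\bigr)$ pointwise; integrating against $d\mathrm{vol}$ and using $\int|\omega_i|^2=1$ recovers the stated bound on $m$.

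The technical heart is Stage 2: one must choose the interpolation exponent on $\|u^q\|_{2p}$, together with the $q$-dependent coefficient $(2q-1)/q^2$ from the integration by parts, so that the product of intermediate Moser constants across infinitely many iterations collapses into precisely the geometric series of ratio $\sqrt{p/\nu}$ and produces the exact exponent $Sp\sqrt{\lambda\nu}/(\sqrt\nu-\sqrt p)$. Stage 1 requires only adapting the Petersen--Wink algebraic lemma to the general parameter $C$, and Stage 3 is a standard pointwise linear algebra argument on orthonormal harmonic forms.
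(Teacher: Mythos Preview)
Your three-stage outline (curvature algebra $\to$ Moser $\to$ Li counting) matches the paper's architecture, but the execution of Stage~2 differs in a way that controls whether the exact constant comes out.

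The paper does \emph{not} interpolate $\|u^q\|_{2p}$ between $\|u^q\|_2$ and $\|u^q\|_{2\nu}$ inside the Moser loop. It simply uses monotonicity of the volume-normalized norms, $\|f^q\|_2\le\|f^q\|_{2p}$, to absorb the Sobolev term into the H\"older term. The recursion is then
\[
\|f\|_{2\nu q}\le\Bigl(1+\tfrac{Sq\sqrt\lambda}{\sqrt{2q-1}}\Bigr)^{1/q}\|f\|_{2pq},
\]
iterated with $q=(\nu/p)^k$, and it terminates at $\|f\|_{2p}$ (not $\|f\|_2$), giving
\[
\|f\|_\infty\le\exp\Bigl(\tfrac{S\sqrt{\lambda\nu}}{\sqrt\nu-\sqrt p}\Bigr)\|f\|_{2p}
\]
with \emph{no} factor $p$ in the exponent yet. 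The factor $p$ enters in Stage~3: the paper applies Li's estimate in the form $\dim V\le m\cdot C_p(V)^p$ with $C_p(V)=\sup_{s\ne0}\|s\|_\infty/\|s\|_{2p}$, proved by the one-line interpolation $\|s\|_\infty\le C_p(V)^p\|s\|_2$ followed by the classical $L^2$ Li lemma. Raising the Moser constant to the $p$-th power is exactly what produces $\exp\bigl(Sp\sqrt{\lambda\nu}/(\sqrt\nu-\sqrt p)\bigr)$.

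So the paper pushes the interpolation \emph{out} of the Moser iteration and into a single step inside Li's lemma; this is why the constant falls out cleanly. Your plan to interpolate at every Moser step and iterate all the way down to $\|u\|_2$ is more delicate, and you have not shown that the resulting product of interpolation-distorted constants still telescopes to the stated exponent. On Stage~1: the paper's algebraic lemma delivers the pointwise inequality with constant $1$, not a separate dimensional $c_{n,k,C}$; carrying such a constant through would change the final bound and is unnecessary here.
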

When $p=1$ and $\lambda=\|\tilde\lambda\|_{p'}=\sup\tilde\lambda,$ Theorem \ref{main} can be found in \cite[Theorem 9.3.2]{PeterBook} with proof based on techniques developed in \cite{gal2}. In this paper, we develop a generalization of the Moser iteration which controls the supremum norm of a continuous function by its lower $L^p$-norms, given the Sobolev inequality on a manifold. The theorem follows from Peter Li's estimation of the dimension of a fiber bundle on the manifold by norms on the bundle, see for example \cite[Lemma 9.2.6]{PeterBook} and \cite{berard}. 

The Sobolev constant is bounded from above in \cite{gallot} if the diameter of the manifold is bounded and the Ricci curvature is bounded from below. 
\begin{prop}
\normalfont
\label{sobolev}
Let $(M,g)$ be a compact Riemannian manifold of dimension $n$ and let $\diam M$ be the diameter of $M.$ Suppose the manifold satisfies $\diam M<D$ and $\ric \ge (n-1)k+\ve(x)$ for some positive constant $D$ and $k.$ Then when the $L^p$-norm of the error term $\|\ve\|_p$ is sufficiently small, there is a constant $C=C(n,kD^2)$ such that $S\le D\cdot C(n,kD^2).$
\end{prop}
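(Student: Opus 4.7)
The plan is to follow Gallot's original argument in \cite{gallot} for obtaining a Sobolev constant from a pointwise Ricci lower bound, and perturb it using the integral Ricci curvature machinery of Petersen--Wei \cite{PeterWei}. The core observation is that all the ingredients in Gallot's proof—Bishop--Gromov volume comparison, a segment/Neumann--Poincar\'e inequality, and the Nash--Moser iteration upgrading $L^2$-Poincar\'e into a Sobolev inequality—depend on the Ricci lower bound only through integrated quantities that survive the addition of a small $L^p$ perturbation $\ve(x)$.

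The first step would be to invoke Petersen--Wei's integral volume comparison: under $\ric\ge (n-1)k+\ve(x)$ with $\|\ve\|_p$ small (and $p>n/2$), the Bishop--Gromov ratio $\vol B(x,r)/v_k(r)$ is still quasi-monotone, with a multiplicative error tending to $1$ as $\|\ve\|_p\to 0$. This is the replacement for the pointwise comparison used by Gallot. Because $\diam M<D$, all radii $r$ that enter the estimate are controlled by $D$, so the error depends on $(n,kD^2,\|\ve\|_p)$ and reduces to the classical constant $C(n,kD^2)$ as $\|\ve\|_p\to 0$.

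The second step would be to turn this into a Neumann--Poincar\'e inequality on $(M,g)$. Gallot's proof bounds $\|f-\bar f\|_2$ by $\|df\|_2$ through integration along minimal geodesics joining pairs of points in $M$; the density of such geodesics is estimated by volume comparison. With Petersen--Wei's integral comparison one gets the same bound up to a perturbative factor. Taking $\|\ve\|_p$ sufficiently small keeps this factor bounded, and the Poincar\'e constant still satisfies a bound of the form $D\cdot C(n,kD^2)$. The third step is then the standard Nash--Moser iteration (or Varopoulos-type heat-kernel argument), which upgrades this $L^2$-Poincar\'e inequality together with the (approximate) volume doubling into the global Sobolev inequality $\|f\|_{2\nu}\le \|f\|_2+S\|df\|_2$ with $S\le D\cdot C(n,kD^2)$.

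The main obstacle is the second step: Gallot's segment-based derivation of the Poincar\'e constant uses the pointwise Ricci bound at each point along each geodesic, and naively replacing it with an integral bound gives error terms of the form $\int_\gamma \ve$, which do not individually lie in $L^p(M)$. The fix is to apply a Fubini-type argument that averages these line-integrals against the Jacobian of the exponential map (itself controlled by volume comparison); this converts $\int_\gamma \ve$ into $\int_M \ve \cdot w$ for a bounded weight $w$, and then H\"older's inequality yields a factor of $\|\ve\|_p$ which can be absorbed once $\|\ve\|_p$ is sufficiently small. Once this obstacle is cleared, tracking the dependence of constants and taking $\|\ve\|_p$ small enough gives the stated bound $S\le D\cdot C(n,kD^2)$.
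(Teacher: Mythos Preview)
The paper does not actually prove this proposition. It is introduced with the sentence ``The Sobolev constant is bounded from above in \cite{gallot} if the diameter of the manifold is bounded and the Ricci curvature is bounded from below,'' and is then stated as a quoted result from Gallot with no accompanying argument anywhere in the text. There is therefore no proof in the paper to compare your proposal against.

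Your outline---Petersen--Wei integral volume comparison in place of pointwise Bishop--Gromov, a Gallot-style segment/Poincar\'e estimate with the line-integral errors averaged via Fubini against the exponential-map Jacobian, and an upgrade to the Sobolev inequality---is a plausible reconstruction of how one would extract the statement from the cited literature. But since the paper treats Proposition~\ref{sobolev} purely as a black box from \cite{gallot}, any such argument goes strictly beyond what the paper itself supplies.
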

When the average of the lowest $C$ eigenvalues of the curvature operator $\mathcal R$ is considered as in the settings of Theorem \ref{main}, we shall see that the condition on the lower bound of the Ricci curvature is automatically satisfied. Thus an upper bound on the diameter gives the following corollary, which generalizes \cite[Theorem 10]{gallot}.
\begin{corollary}
\normalfont
\label{cor}
Let $(M,g)$ be a compact Riemannian manifold of dimension $n$ and let $D=\diam M.$ Then there is an $\ve=\ve(D,n,p)>0$ such that $b_k(M)\le b_k(\mathbb T^n)$ for any $1\le k\le n$ whenever $\lambda<\ve.$ 
\end{corollary}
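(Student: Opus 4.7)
The plan is to combine Theorem \ref{main} with Proposition \ref{sobolev} and exploit the fact that $b_k(M)$ is integer-valued while $b_k(\mathbb T^n)=\binom{n}{k}$. Theorem \ref{main} gives
$$b_k(M)\le \binom{n}{k}\exp\Bigl(\dfrac{Sp\sqrt{\lambda\nu}}{\sqrt{\nu}-\sqrt{p}}\Bigr),$$
so whenever the exponential factor is strictly less than $1+\binom{n}{k}^{-1}$ we obtain $b_k(M)<\binom{n}{k}+1$ and hence $b_k(M)\le \binom{n}{k}=b_k(\mathbb T^n)$. It therefore suffices to control the Sobolev constant $S$ in terms of $n$ and $D$ and then shrink $\lambda$.

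To invoke Proposition \ref{sobolev}, I would extract a pointwise lower bound on $\ric$ from the curvature operator data. For a unit vector $v\in T_xM$ completed to an orthonormal basis $(v,e_2,\ldots,e_n)$, the bivectors $v\wedge e_2,\ldots,v\wedge e_n$ form an orthonormal system in $\bigwedge^2 T_xM$, so Ky Fan's minimax principle yields
$$\ric(v,v)=\sum_{i=2}^n \langle \mathcal R(v\wedge e_i),v\wedge e_i\rangle \ge \lambda_1+\cdots+\lambda_{n-1}.$$
Taking $C=n-1$ in Theorem \ref{main}, this reads $\ric\ge (n-1)\tilde\lambda$ pointwise. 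Fixing a small auxiliary constant $\kappa_0>0$ and rewriting as $\ric\ge (n-1)\kappa_0+(n-1)(\tilde\lambda-\kappa_0)$, the $L^p$-norm of the error term is dominated by $\|\tilde\lambda\|_p+\kappa_0\vol(M)^{1/p}$, which by H\"older is bounded in terms of $\lambda=\|\tilde\lambda\|_{p'}$ and $\vol(M)$. A rough volume bound comes from Bishop--Gromov applied to any crude Ricci lower bound, which the smallness of $\lambda$ already supplies.

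Proposition \ref{sobolev} then yields $S\le D\cdot C(n,\kappa_0 D^2)$, a quantity depending only on $n$ and $D$ once $\kappa_0$ is fixed. The exponent in Theorem \ref{main} therefore tends to zero as $\lambda\to 0$ with $n,D,p$ held fixed, and it remains to choose $\ve=\ve(D,n,p)$ small enough that this exponent lies below $\log(1+\binom{n}{k}^{-1})$ for every $k\in\{1,\ldots,n\}$ simultaneously. The main technical obstacle is the mild bootstrap between deriving a Ricci lower bound from the integral $L^{p'}$-control on $\tilde\lambda$ and the Bishop--Gromov volume bound needed to pass to the $L^p$-control required by Proposition \ref{sobolev}; this circularity is broken in standard fashion by tightening the smallness threshold $\ve$ a finite number of times.
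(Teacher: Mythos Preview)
Your outline coincides with the paper's. The corollary is stated immediately after Proposition \ref{sobolev} and justified only by the remark that the curvature-operator hypothesis already forces the Ricci lower bound required there (the paper records this separately as a corollary of Lemma \ref{eigen}, specializing the Weitzenb\"ock curvature to $1$-forms); once $S$ is controlled in terms of $n$ and $D$, Theorem \ref{main} together with the integer-valuedness of $b_k$ and $b_k(\mathbb T^n)=\binom{n}{k}$ gives the conclusion exactly as you describe.

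One simplification: the $L^p$-norms in this paper are volume-normalized (Section 2.1), so the norm of a constant function equals the constant, and comparing different exponents via H\"older introduces no $\vol(M)$ factor. Consequently the bootstrap you sketch through Bishop--Gromov to control $\vol(M)$ is not needed; the $L^{p'}$-smallness $\|\tilde\lambda\|_{p'}=\lambda<\ve$ feeds directly into Proposition \ref{sobolev}, and the circularity you worry about does not actually arise.
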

The previous corollary is a pinching result using integral bounds on the curvature operator, where the curvature operator is pinched below 0. It is then natural to ask what conditions are necessary for the Betti numbers to vanish when the curvature operator is pinched below a positive constant, namely when the curvature operator is bounded below by 1 plus an error term $\ve.$ 
\begin{theorem}
\normalfont
Let $(M,g)$ be a compact Riemannian manifold of dimension $n$ and bounded diameter $\diam M<D.$ For a fixed $p\in(1,\nu),$ set $\|1-\tilde\lambda\|_{p'}= \kappa$
where $1/p+1/p'=1.$ Set
$$C(S,\kappa,p) := \exp\bigl( \dfrac{2pS\sqrt{\kappa\nu}}{\sqrt{\nu}-\sqrt p}\bigr)\kappa.$$
Then $b_k(M)=0$ if $\kappa < 1$ and $C(S,\kappa,p) < 1.$ Specifically, there is $\ve=\ve(D,n,p)<1$ such that $b_k(M)=0$ whenever $\kappa<\ve.$
 
\label{vanish}
\end{theorem}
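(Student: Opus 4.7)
The plan is to run a Bochner--Moser iteration analogous to the proof of Theorem~\ref{main}, exploiting the strict positivity that arises from the pinching $\tilde\lambda \approx 1$. Let $\omega$ be a harmonic $k$-form and set $u = |\omega|$. The Bochner--Weitzenb\"ock formula, together with the bound on the Weitzenb\"ock curvature endomorphism on $k$-forms in terms of the average of the lowest eigenvalues of $\mathcal{R}$ (as in \cite{Petersen}) and Kato's inequality, yields $\Delta u \ge c\,\tilde\lambda\,u$ for a positive dimensional constant $c = c(n,k)$. Writing $\tilde\lambda = 1 - (1-\tilde\lambda)$ produces the ``positive part plus controlled error'' split
$$
\Delta u + c(1-\tilde\lambda)u \ge c\,u.
$$

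For $q \ge 2$ I would multiply by $u^{q-1}$, integrate, and apply H\"older with exponents $(p,p')$ to bound $\int(1-\tilde\lambda)u^q \le \kappa\|u\|_{qp}^q$. With $v = u^{q/2}$ and $\alpha_q = 4(q-1)/(cq^2)$, this gives the combined estimate
$$
\|v\|_2^2 + \alpha_q\|\nabla v\|_2^2 \le \kappa\,\|v\|_{2p}^2,
$$
which crucially contains \emph{both} a coercive $\|v\|_2^2$ term (from the pinching, absent in the setup of Theorem~\ref{main}) and the usual gradient term. Substituting each of the two resulting bounds into the Sobolev inequality $\|v\|_{2\nu} \le \|v\|_2 + S\|\nabla v\|_2$ yields $\|v\|_{2\nu} \le \sqrt{\kappa}(1+S/\sqrt{\alpha_q})\|v\|_{2p}$, i.e.
$$
\|u\|_{q\nu} \le \kappa^{1/q}\bigl(1 + S/\sqrt{\alpha_q}\bigr)^{2/q}\|u\|_{qp}.
$$
I would then iterate along $q_i = q_0(\nu/p)^i$ (so $q_{i+1}p = q_i\nu$) and let $i \to \infty$. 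Estimating $\ln(1+x)\le x$ and summing the geometric-type series $\sum 1/\sqrt{q_i} = \sqrt{\nu}/(\sqrt{q_0}(\sqrt{\nu}-\sqrt{p}))$ produces the exponential; a judicious choice of $q_0$ makes $\sum 1/q_i = 1$, absorbing $\prod\kappa^{1/q_i}$ into the single factor of $\kappa$ appearing in $C(S,\kappa,p)$.

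In the limit, H\"older together with the volume normalization implicit in the Sobolev constant (so that $\|u\|_{q_0 p} \le \|u\|_\infty$) gives $\|u\|_\infty \le C(S,\kappa,p)\|u\|_\infty$. Since $C(S,\kappa,p) < 1$ by hypothesis, this forces $u \equiv 0$, hence $\omega = 0$; as this holds for every harmonic $k$-form, Hodge theory gives $b_k(M) = 0$. For the quantitative statement, Proposition~\ref{sobolev} bounds $S$ in terms of $D$ and $n$, since the lower bound on the average of the lowest eigenvalues of $\mathcal{R}$ forces a Ricci lower bound; and since $C(S,\kappa,p) \to 0$ as $\kappa \to 0$, any sufficiently small $\ve = \ve(D,n,p)$ makes $C<1$. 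The main obstacle is the careful bookkeeping through the iteration needed to produce \emph{exactly} the stated constant, especially the freestanding factor of $\kappa$ outside the exponential and the coefficient $2p$ inside, both of which depend sensitively on the choice of the starting exponent $q_0$ and on the Bochner constant $c(n,k)$.
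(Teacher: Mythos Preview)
Your iteration of the full pinched inequality is a valid route to the qualitative vanishing, but the paper proceeds differently, and that difference is exactly what produces the stated constant. The paper does \emph{not} iterate the inequality $\|v\|_2^2 + \alpha_q\|\nabla v\|_2^2 \le \kappa\|v\|_{2p}^2$. Instead it first discards the coercive term and applies the Moser iteration of Lemma~\ref{moser} to the weaker inequality $\Delta f \ge -\ve f$ (with $\ve = 1-\tilde\lambda$, so $\|\ve\|_{p'}=\kappa$), obtaining $\|f\|_\infty \le A\|f\|_{2p}$ with $A=\exp\bigl(\tfrac{S\sqrt{\kappa\nu}}{\sqrt\nu-\sqrt p}\bigr)$. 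It then uses the full inequality just once, at the single exponent $q=p$:
\[
0\;\le\; \|df^p\|_2^2 \;\le\; \frac{p^2}{2p-1}\bigl(\kappa\|f\|_{2p^2}^{2p}-\|f\|_{2p}^{2p}\bigr)\;\le\; \frac{p^2}{2p-1}\bigl(\kappa A^{2p}-1\bigr)\|f\|_{2p}^{2p},
\]
and $\kappa A^{2p}$ is precisely $C(S,\kappa,p)$. When $C<1$ this forces $df\equiv 0$, so $f$ is constant; then $0=\Delta f\ge(1-\ve)f$ together with $\kappa<1$ (which guarantees a point where $\ve<1$) gives $f\equiv 0$.

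The obstacle you flag is therefore real, and for a specific reason: in your scheme each step contributes $\sqrt\kappa\,(1+S/\sqrt{\alpha_q})$, so after $\log(1+x)\le x$ the exponential carries no $\kappa$-dependence and you end up with a constant of the shape $\kappa\cdot\exp(\text{const}\cdot S)$ rather than $\kappa\cdot\exp(\text{const}\cdot S\sqrt\kappa)$. The $\sqrt\kappa$ inside the exponential and the coefficient $2p$ both come from the two-pass structure above (the Moser constant already scales with $\sqrt\kappa$, and it is raised to the power $2p$), not from a choice of $q_0$. Your argument still gives the existence of $\ve(D,n,p)$, since your constant also tends to $0$ with $\kappa$; it just does not recover the sharper threshold stated. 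Finally, there is no Bochner constant $c(n,k)$ to track here: Lemma~\ref{eigen} delivers $g(\ric(\omega),\omega)\ge\tilde\lambda|\omega|^2$ with coefficient $1$.
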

We modify the Moser iteration above to derive a generalization of the maximum principle given an integral bound between a function $f$ and its derivative $\Delta f.$ Theorem \ref{vanish} now follows from an application of the Bochner technique for harmonic forms.

Futhermore, Aubry \cite{Aubry} established an upper bound of the diameter when the integral of the Ricci curvature pinched below a positive constant is bounded. A renormalization of \cite[Theorem 1.2]{Aubry} shows

\begin{prop}
\label{aubry}
\normalfont Let $(M, g)$ be a complete Riemannian manifold of dimension $n.$ For any $p>n/2$ and $k>0$ there exists a constant $C(p,n, k)$ such that if 
$$
\rho_p:=\int_M (\ric_{1} - k(n-1))_-^p < \dfrac{\vol M}{C(p,n,k)},
$$
where $\ric_1$ is the lowest eigenvalue of the Ricci curvature, then $M$ is compact and its diameter satisfies
$$
\diam M \le \dfrac\pi{\sqrt{k}}\bigl(1+C(p,n,k)\bigl(\dfrac{\rho_p}{\vol M}\bigr)^{1/10}\bigr)\le \dfrac\pi{\sqrt{k}}\bigl(1+C(p,n,k)^{9/10}\bigr).
$$
\end{prop}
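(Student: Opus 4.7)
The strategy is a rescaling argument: apply Aubry's theorem, which is the special case $k=1$ (Ricci lower bound normalized to $n-1$), to the rescaled metric $\tilde g = k g$ and then translate all quantities back to $g$. The content of the proposition is simply bookkeeping the dependence of the smallness constant and pinching exponent on $k$; the hard analytic work lives entirely inside \cite[Theorem 1.2]{Aubry}.

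First I would recall Aubry's theorem in the form: there is $C(p,n)$ such that if
$$\tilde\rho_p := \int_M(\widetilde{\ric}_1 - (n-1))_-^p\, d\widetilde{\vol} < \frac{\widetilde{\vol} M}{C(p,n)},$$
then $(M,\tilde g)$ is compact with $\widetilde{\diam}\, M \le \pi(1+C(p,n)(\tilde\rho_p/\widetilde{\vol} M)^{1/10})$. Next I would track how the hypotheses and conclusion transform under a homothety $\tilde g = \lambda^2 g$. Since the Christoffel symbols and the $(0,2)$-Ricci tensor are unchanged by a homothety while the eigenvalues of the Ricci endomorphism scale as $\lambda^{-2}$, and since lengths scale as $\lambda$ and volumes as $\lambda^n$, choosing $\lambda^2 = k$ gives $\widetilde{\ric}_1 = k^{-1}\ric_1$ and $(\widetilde{\ric}_1 - (n-1))_- = k^{-1}(\ric_1 - k(n-1))_-$, so
$$\tilde\rho_p = k^{n/2-p}\rho_p, \qquad \widetilde{\vol} M = k^{n/2}\vol M, \qquad \widetilde{\diam}\, M = \sqrt{k}\,\diam M.$$
In particular $\tilde\rho_p / \widetilde{\vol} M = k^{-p}\rho_p/\vol M$, so Aubry's smallness hypothesis becomes $\rho_p < k^p\vol M/C(p,n)$ and his diameter estimate becomes
$$\sqrt{k}\,\diam M \le \pi\bigl(1 + C(p,n) k^{-p/10}(\rho_p/\vol M)^{1/10}\bigr).$$

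Then I would absorb the $k$-powers into a new constant by setting $C(p,n,k)$ to be at least $\max\{C(p,n)k^{-p},\,C(p,n)k^{-p/10},\,1\}$; this single constant simultaneously ensures that the hypothesis $\rho_p < \vol M /C(p,n,k)$ implies Aubry's smallness assumption and that the translated diameter estimate is bounded by the first displayed inequality of the proposition. The second inequality is then a direct consequence of the smallness hypothesis: it gives $(\rho_p/\vol M)^{1/10} < C(p,n,k)^{-1/10}$, so $C(p,n,k)(\rho_p/\vol M)^{1/10} < C(p,n,k)^{9/10}$.

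The main (and really only) obstacle is choosing a uniform $C(p,n,k)$ that works for all three occurrences in the statement; this is settled by taking a maximum as above. Beyond that, the argument is mechanical scaling.
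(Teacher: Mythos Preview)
Your proposal is correct and is exactly the approach the paper takes: the paper states this proposition without proof, merely saying that ``A renormalization of \cite[Theorem 1.2]{Aubry} shows'' the result, and your argument carries out that rescaling in detail. The scaling computations and the absorption of the $k$-powers into a single constant are all handled correctly.
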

Under the assumption of Proposition \ref{aubry}, Theorem \ref{vanish} can be further improved so that the bound only depends on the dimension.
\begin{corollary}
\label{cor2}
\normalfont
Let $(M,g)$ be a complete Riemannian manifold of dimension $n.$ For a fixed $p>\max\{n/2,\nu/(\nu-1)\}$ set $\kappa = \|1-\tilde\lambda\|_p.$ Then there is $\ve=\ve(n,p)$ such that $b_k(M)=0$ whenever $\kappa<\ve.$
\end{corollary}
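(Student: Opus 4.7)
The plan is to combine Proposition \ref{aubry} with Theorem \ref{vanish}: the former replaces the diameter hypothesis of the latter by a bound depending only on $n$ and $p$.

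First, I would extract from the hypothesis $\kappa=\|1-\tilde\lambda\|_p$ small an $L^p$ bound on the negative part of $\ric_1-(n-1)$. The min-max principle gives the pointwise estimate $\ric_1\ge \sum_{i=1}^{n-1}\lambda_i$, since $\ric(v,v)$ equals the sum of $n-1$ diagonal entries of $\mathcal R$ with respect to a basis $\{v\wedge e_i\}_{i=1}^{n-1}$ of an $(n-1)$-dimensional subspace of $\bigwedge^2TM$. In the relevant regime $C\le n-1$ (the regime in which vanishing of $b_k$ is meaningful, analogous to Petersen-Wink's $(n-k)$-positive condition), the inequality $\lambda_i\ge \lambda_C\ge \tilde\lambda$ for $i\ge C$ lets one close the sum to obtain $\ric_1\ge (n-1)\tilde\lambda$. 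Consequently
$$
\|(\ric_1-(n-1))_-\|_p\le (n-1)\kappa.
$$

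Second, I would apply Proposition \ref{aubry} with $k=1$: the hypothesis $p>n/2$ places us in Aubry's range, and the smallness requirement on $\rho_p$ is met once $\kappa$ is smaller than a threshold depending only on $n$ and $p$, so $M$ is automatically compact with $\diam M\le D_0(n,p):=\pi(1+C(p,n,1)^{9/10})$. Third, with $\diam M<D_0$ in hand, Proposition \ref{sobolev} produces a Sobolev constant $S\le D_0\cdot C(n,D_0^2)$ that depends only on $n$ and $p$, while the condition $p>\nu/(\nu-1)$ ensures the H\"older conjugate of $p$ lies in the admissible range $(1,\nu)$ of Theorem \ref{vanish}. Feeding these now dimension-only quantities into Theorem \ref{vanish} yields $b_k(M)=0$ whenever $\kappa$ is smaller than a threshold that has become a function of $n$ and $p$ alone.

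The principal obstacle is the bookkeeping: one must simultaneously satisfy the smallness requirements coming from Proposition \ref{aubry} (for compactness and the diameter bound), Proposition \ref{sobolev} (for the Sobolev constant), and Theorem \ref{vanish} (for the final vanishing), and verify that all three can be controlled by a single $\ve(n,p)$. Extra care is needed to match the $L^p$-norm conventions (in particular, whether they are normalized by volume) across the chain of applications, since Proposition \ref{aubry}'s hypothesis is phrased via the dimensionless ratio $\rho_p/\vol M$ whereas Theorem \ref{vanish} uses $\|1-\tilde\lambda\|_{p'}$ directly.
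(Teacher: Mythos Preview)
Your proposal is correct and follows exactly the approach the paper intends: the paper does not give a detailed proof of Corollary~\ref{cor2}, merely remarking that Proposition~\ref{aubry} removes the diameter hypothesis from Theorem~\ref{vanish}, and your write-up supplies the missing links in the right order (the pointwise bound $\ric_1\ge(n-1)\tilde\lambda$ via min--max when $C\le n-1$, which is precisely the content of the paper's own Corollary~2.3, then Aubry for the diameter, then Proposition~\ref{sobolev} for $S$, then Theorem~\ref{vanish}). Your observation that $p>\nu/(\nu-1)$ forces the conjugate exponent into $(1,\nu)$ is exactly the reason that hypothesis appears in the statement.
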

\subsection{Acknowledgements.} The author would like to thank Professor Peter Petersen for introducing this question to the author and his support of this work. This project was completed as a part of UCLA summer REU 2022 under the supervision of Professor Peter Petersen.
\section{Bounding Betti Numbers}
\noindent
\subsection{Norms on the Space of Continuous Functions.} For a compact Riemannian manifold $(M,g)$ and any continuous $f:M\to\R$ we define
$$
\|f\|_{\infty}=\max_{x\in M}|f(x)|
$$
and for any $p\ge 1$
$$
\|f\|_p:=\bigl(\dfrac1{\vol M}\int_M|f|^pdg\bigr)^{1/p}.
$$
These are norms on the vector space of continuous functions on $M,$ and $\|f\|_p$ increases to $\|f\|_{\infty}$ as $p\to+\infty.$
\subsection{The Weitzenb\"ock Curvature} The \textit{Weitzenb\"ock curvature operator} on a tensor $T$ can be defined by
$$
\ric(T)(X_1,\cdots,X_k)=\sum (R(e_j,X_i)T)(X_1,\cdots, e_j, X_k).
$$
The symbol $\ric$ is used since the Weitzenb\"ock curvature is the Ricci curvature when evaluated on 1-forms and vector fields. The Weitzenb\"ock formula states that
$$
\triangle \omega = \nabla ^*\nabla \omega +\ric(\omega)
$$
where $\triangle =d\delta+\delta d$ is the Hodge Laplacian. Generally the right hand side of the Weitzenb\"ock formula is a \textit{Lichnerowicz Laplacian} defined by
$$
\Delta_LT:=\nabla ^*\nabla T +c\ric(T)
$$
for tensor $T$ and some $c>0.$ For a more complete exposition on the relationship between Lichnerowicz Laplacians and the Bochner technique, see \cite[Chapter 9]{PeterBook}.
\subsection{Controlling Betti Numbers} The following Lemma generalizes the classical Moser iteration.
\begin{lemma}
\label{moser}
\normalfont
Let $(M,g)$ be a compact Riemannian manifold such that 
$$\|u\|_{2\nu}\le S\|\nabla u\|_2+\|u\|_2$$
for some $\nu>1$ and a Sobolev constant $S.$ Suppose $f:M\to[0,+\infty)$ is continuous, smooth on $\{f>0\}$ and $$\Delta f\ge -\tilde\lambda f$$ where $\tilde\lambda:M\to\R$ is continuous. For a fixed  $p\in(1,\nu)$ let   
$
 \|\tilde\lambda\|_{p'}= \lambda
$
where $1/p+1/p'=1.$ Then 
$$
\|f\|_{\infty}\le \exp\bigl( \dfrac{S\sqrt{\lambda\nu}}{\sqrt{\nu}-\sqrt p}\bigr)\|f\|_{2p}.
$$
\end{lemma}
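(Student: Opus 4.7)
My plan is to prove this by a modified Moser iteration tailored to an integral rather than $L^\infty$ bound on the coefficient $\tilde\lambda$. The starting move is to test the differential inequality $\Delta f \ge -\tilde\lambda f$ against $f^{q-1}$ for $q > 1$ and integrate over $M$. Since $f \ge 0$ is smooth on $\{f > 0\}$ and $f^{q-1}$ vanishes on the boundary $\partial\{f > 0\}$, integration by parts produces no boundary contribution, and using $f^{q-2}|\nabla f|^2 = (4/q^2)|\nabla f^{q/2}|^2$ I obtain
$$\frac{4(q-1)}{q^2}\int_M |\nabla f^{q/2}|^2\,dg \le \int_M \tilde\lambda f^q\,dg.$$
I then apply H\"older with conjugate exponents $p',p$ (with the normalized measure $dg/\vol M$) to bound the right side by $\lambda\|f\|_{pq}^q$, and apply the Sobolev inequality to $u = f^{q/2}$ to get $\|f\|_{q\nu}^{q/2} \le S\|\nabla f^{q/2}\|_2 + \|f\|_q^{q/2}$.

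Rearranging these two bounds, squaring, and using the monotonicity $\|f\|_q \le \|f\|_{pq}$ (valid for the normalized $L^p$-norms since $p > 1$), I extract the one-step inequality
$$\|f\|_{q\nu} \le \|f\|_{pq}\left(1+\frac{qS\sqrt{\lambda}}{2\sqrt{q-1}}\right)^{2/q}.$$
I then iterate by setting $q_0 = 2$ and $q_{k+1} = (\nu/p)q_k$ so that the left-hand exponent $q_k\nu$ at step $k$ equals the right-hand exponent $pq_{k+1}$ at step $k+1$; this produces $q_k = 2(\nu/p)^k \to \infty$. Writing $B_k = \|f\|_{pq_k}$, one has $B_0 = \|f\|_{2p}$ and $B_k \to \|f\|_\infty$.

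Taking logarithms of the one-step inequality, applying $\log(1+x)\le x$, and telescoping gives
$$\log\frac{\|f\|_\infty}{\|f\|_{2p}} \le S\sqrt{\lambda}\sum_{k=0}^\infty \frac{1}{\sqrt{q_k - 1}} \le S\sqrt{\lambda}\sum_{k=0}^\infty \left(\frac{p}{\nu}\right)^{k/2} = \frac{S\sqrt{\lambda\nu}}{\sqrt{\nu}-\sqrt{p}},$$
where I used $q_k \ge 2$ to conclude $1/\sqrt{q_k-1} \le \sqrt{2}/\sqrt{q_k} = (p/\nu)^{k/2}$, and $p < \nu$ to sum the geometric series. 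Exponentiating yields exactly the constant claimed in the lemma.

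The main point requiring care is justifying the integration by parts when $f$ is only smooth on $\{f > 0\}$, not everywhere. Since the iteration only uses $q \ge 2$, the integrand $f^{q-2}|\nabla f|^2$ vanishes continuously at $\{f = 0\}$, and one can make the argument rigorous by first applying the identity to $(f-\ve)_+$ on the open set $\{f > \ve\}$ and then letting $\ve \downarrow 0$. Beyond that, the proof is bookkeeping; the structural role of the hypothesis $p < \nu$ is precisely to keep the geometric ratio $\sqrt{p/\nu} < 1$ so that the telescoped series converges, which also explains the blow-up of the constant as $p \uparrow \nu$.
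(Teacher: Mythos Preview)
Your proof is correct and is essentially the same as the paper's: after the relabeling $q_{\text{paper}} = q_{\text{student}}/2$, your testing against $f^{q-1}$, H\"older step, Sobolev step, and geometric iteration with ratio $\nu/p$ reproduce the paper's computation line for line, down to the same bound $\sqrt{2(\nu/p)^k-1}\ge(\nu/p)^{k/2}$ in the summation. Your added remark justifying the integration by parts via $(f-\ve)_+$ is a detail the paper omits.
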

\begin{proof}
By Green's formula,
$$(f^{2q-1},\Delta f)=-(df^{2q-1},df)=-(2q-1)(f^{2q-2}df,df).$$
Then by H\"older's inequality,
\begin{align*}
    \|df^q\|_2^2&=q^2(f^{2q-2}df,df)=-\dfrac{q^2}{2q-1}(f^{2q-1},\Delta f) \\
    &\le \dfrac{q^2}{(2q-1)(\vol M)}\int_{M}f^{2q-1}\tilde\lambda fdg \\
    &=\dfrac{q^2}{(2q-1)(\vol M)}\int_{M}f^{2q}\tilde\lambda  dg \\
    &\le\dfrac{q^2}{(2q-1)(\vol M)}\bigl(\int_{M}f^{2pq}dg\bigr)^{1/p}\bigl(\int_{M}|\tilde\lambda |^{p'} dg\bigr)^{1/p'} \\
    &= \dfrac{q^2\lambda\|f\|_{2pq}^{2q}}{2q-1}.
\end{align*}
The Sobolev inequality implies 
\begin{align*}
    \|f^q\|_{2\nu}&\le S\|df^q\|_2+\|f^q\|_2 \\
    &\le \|f^q\|_2+\dfrac{Sq\sqrt\lambda\|f\|_{2pq}^{q}}{\sqrt{2q-1}} \\
    &\le \bigl(1+\dfrac{Sq\sqrt\lambda}{\sqrt{2q-1}}\bigr)\|f\|_{2p q}^q
\end{align*}
and then
$$
\|f\|_{2\nu q}\le \bigl(1+\dfrac{Sq\sqrt\lambda}{\sqrt{2q-1}}\bigr)^{1/q}\|f\|_{2p q}.
$$
Set $q=(\nu/p)^k$ for each $k\in\N.$ By iterating from $k=0,$ we can obtain
$$
\|f\|_{\infty}\le\|f\|_{2p}\prod_{k=0}^{+\infty}\bigl(1+\dfrac{S(\nu/p)^k\sqrt\lambda}{\sqrt{2(\nu/p)^k-1}}\bigr)^{1/(\nu/p)^k}.
$$
The infinite product can be estimated by taking logarithms and using $\log(1+x)\le x.$
\begin{align*}
    &\sum_{k=0}^{+\infty}\bigl(\dfrac\nu p\bigr)^{-k}\log \bigl(1+\dfrac{S(\nu/p)^k\sqrt\lambda}{\sqrt{2(\nu/p)^k-1}}\bigr) \\
    \le \, & \sum_{k=0}^{+\infty}\dfrac{S\sqrt\lambda}{\sqrt{2(\nu/p)^k-1}}\\
    \le\,& \sum_{k=0}^{+\infty}\dfrac{S\sqrt\lambda}{(\nu/p)^{k/2}} \\
    =\, & \dfrac{S\sqrt{\lambda\nu}}{\sqrt{\nu}-\sqrt p}.
\end{align*}
The conclusion follows.
\end{proof}

\begin{lemma}
\label{eigen}
\normalfont
Let $(M,g)$ be a compact Riemannian manifold and let $\mathcal R$ be the curvature operator viewed as an operator on $\bigwedge^2TM.$ Let $\lambda_1\le \lambda_2\le\cdots\le\lambda_{\binom n2}$ be eigenvalues of $\mathcal R.$ Suppose for a continuous $\tilde\lambda:M\to\R$ and some $1\le C\le \binom n2$ there is 
$$\dfrac1{C}(\lambda_1+\cdots+\lambda_C)\ge -\tilde\lambda.$$
Then $g(\ric(\omega),\omega)\ge -\tilde\lambda|\omega|^2$ where $\ric$ is the Weitzenb\"ock curvature.
\end{lemma}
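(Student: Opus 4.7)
The plan is pointwise and algebraic: it suffices to establish the bound at each $p \in M$ separately, and the statement then follows by combining over $p.$

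First I would fix $p \in M$ and choose an orthonormal eigenbasis $\{E_\alpha\}_{\alpha=1}^{\binom{n}{2}}$ of $\bigwedge^2 T_pM$ diagonalizing $\mathcal R,$ with $\mathcal R E_\alpha = \lambda_\alpha(p)\, E_\alpha.$ Working in a frame adapted to this eigendecomposition, the Weitzenböck curvature of $\omega$ admits an explicit expansion
$$g(\ric(\omega),\omega)\bigl|_p = \sum_{\alpha} \lambda_\alpha(p)\, m_\alpha(\omega)$$
with nonnegative coefficients $m_\alpha(\omega)\ge 0$ depending only on how $\omega$ pairs with the bivectors $E_\alpha,$ subject to a prescribed total mass together with a uniform pointwise bound expressed in terms of $|\omega|^2.$ This is the reformulation of the Bochner--Weitzenböck identity in terms of the eigenstructure of the curvature operator used by Petersen--Wink.

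Second, I would run a rearrangement-type argument. Because the $\lambda_\alpha$ are increasing and the $m_\alpha$ have fixed total mass plus an individual upper bound, the sum $\sum_\alpha \lambda_\alpha m_\alpha$ is minimized when equal mass is concentrated on the lowest $C$ eigenvalues. Carrying out this extremal configuration would yield
$$g(\ric(\omega),\omega) \ge \bigl(\tfrac{1}{C}\textstyle\sum_{i=1}^{C}\lambda_i\bigr)|\omega|^2 \ge -\tilde\lambda\, |\omega|^2,$$
the last inequality being the hypothesis.

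The main obstacle is the algebraic step: one must identify the correct expansion $g(\ric(\omega),\omega) = \sum \lambda_\alpha m_\alpha$ together with the precise constraints on the $m_\alpha$ (total mass and individual bound) that make the rearrangement step produce the clean constant $1/C$ in front. This is essentially Petersen--Wink's algebraic lemma translated into the present averaging-of-eigenvalues formulation, and it is where the substantive work lies; the rest of the argument is routine manipulation of the Bochner formula.
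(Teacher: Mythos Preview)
Your proposal is correct and follows essentially the same approach as the paper: diagonalize $\mathcal R$ at a point, expand $g(\ric(\omega),\omega)=\sum_\alpha\lambda_\alpha\,|\Xi_\alpha\omega|^2$ (so $m_\alpha=|\Xi_\alpha\omega|^2$), and run the rearrangement by pivoting at $\lambda_{C+1}$ to push mass onto the lowest $C$ eigenvalues. The paper likewise attributes the algebraic step you flag as the main obstacle to Petersen--Wink.
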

\begin{proof}
This follows purely from pointwise algebraic manipulations, for example in \cite{Petersen}. Choose an orthonormal basis $\{\Xi_\alpha\}$ such that $\mathcal R(\Xi_\alpha)=\lambda_\alpha\Xi_\alpha.$ Then we can calculate that
\begin{align*}
    g(\ric(\omega),\omega) &=\sum_{\alpha=1}^{\binom n2}\lambda_\alpha|\Xi_\alpha \omega|^2 \\
    &=\sum_{\alpha=1}^{C}\lambda_\alpha|\Xi_\alpha \omega|^2+\sum_{\alpha=C+1}^{\binom n2}\lambda_\alpha|\Xi_\alpha \omega|^2 \\
    &\ge \sum_{\alpha=1}^{C}\lambda_\alpha|\Xi_\alpha \omega|^2+\lambda_{C+1}\sum_{\alpha=C+1}^{\binom n2}|\Xi_\alpha \omega|^2 \\
    &=\lambda_{C+1}|\omega|^2+\sum_{\alpha=1}^{C}(\lambda_\alpha-\lambda_{C+1})|\Xi_\alpha \omega|^2 \\
    &\ge \lambda_{C+1}|\omega|^2+\dfrac1C\sum_{\alpha=1}^{C}(\lambda_\alpha-\lambda_{C+1})|\omega|^2 \\
    &=\dfrac{|\omega|^2}C\sum_{\alpha=1}^{C}\lambda_\alpha\ge -\tilde\lambda|\omega|^2.
\end{align*}
This concludes the proof.
\end{proof}
There is an immediate corollary that derives a lower bound on the Ricci curvature and therefore a bound on the Sobolev constant.
\begin{corollary}
\normalfont
Let $(M,g)$ be a compact Riemannian manifold with diameter $\diam M<D.$ Let $\mathcal R$ be the curvature operator viewed as an operator on $\bigwedge^2TM.$ Let $\lambda_1\le \lambda_2\le\cdots\le\lambda_{\binom n2}$ be eigenvalues of $\mathcal R.$ Suppose for a continuous $\tilde\lambda:M\to\R$ and some $1\le C\le n-1$ such that
$$\dfrac1{C}(\lambda_1+\cdots+\lambda_C)\ge -\tilde\lambda.$$
Then there is a function $C(n,D,\tilde\lambda)$ such that $S\le C(n,D,\tilde\lambda).$
\end{corollary}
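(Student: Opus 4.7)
The plan is to extract a pointwise lower bound on the Ricci curvature from Lemma \ref{eigen} and then feed that bound into Proposition \ref{sobolev} to bound the Sobolev constant $S$.

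The first step is to apply Lemma \ref{eigen} with $\omega$ an arbitrary 1-form. Since the Weitzenb\"ock curvature on 1-forms agrees with the Ricci tensor under the musical isomorphism, and since the range $C\le n-1$ is exactly the range in which the algebraic identity underlying Lemma \ref{eigen} is meaningful in degree one (a 2-form has at most $n-1$ independent directions of non-trivial action on a fixed nonzero 1-form, so only the lowest $n-1$ eigenvalues of $\mathcal R$ are relevant to the Weitzenb\"ock/Ricci expression), the lemma produces the pointwise inequality
$$
\ric(X,X)\ge -\tilde\lambda(x)\,|X|_g^2\qquad\text{for every }X\in T_xM,\ x\in M.
$$
Because $\tilde\lambda$ is continuous on the compact manifold $M$, it attains a finite maximum $\|\tilde\lambda\|_\infty$, which upgrades the previous inequality to the uniform Ricci lower bound $\ric\ge -\|\tilde\lambda\|_\infty\, g$ everywhere on $M$.

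The second step is to combine this uniform Ricci lower bound with the hypothesis $\diam M<D$ via Proposition \ref{sobolev} in its error-free specialization $\ve\equiv 0$. Taking $(n-1)k=-\|\tilde\lambda\|_\infty$, the proposition yields a Sobolev constant bound of the form $S\le D\cdot C(n,D,\|\tilde\lambda\|_\infty)$, and defining $C(n,D,\tilde\lambda)$ to be this quantity completes the proof.

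I do not anticipate a serious obstacle here. The reduction of Lemma \ref{eigen} to a Ricci lower bound in degree one is immediate once the Weitzenb\"ock and Ricci curvatures are identified, and the existence of a Sobolev constant bound given only a diameter bound and an arbitrary real Ricci lower bound is classical. The one mild point to verify is that although Proposition \ref{sobolev} is stated under the positivity assumption $k>0$, its proof in the case $\ve\equiv 0$ reduces to Gallot's isoperimetric inequality, which imposes no sign condition on $k$; this is the only subtlety to record.
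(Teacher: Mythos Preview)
Your proposal is correct and follows essentially the same route as the paper: specialize Lemma~\ref{eigen} to $1$-forms so that the Weitzenb\"ock curvature becomes the Ricci tensor, obtain the pointwise Ricci lower bound $\ric\ge -\tilde\lambda\,g$, and then invoke Proposition~\ref{sobolev}. The paper's proof is the one-line ``specialize the Weitzenb\"ock curvature to the Ricci curvature and apply Proposition~\ref{sobolev}''; you have simply unpacked this, and in fact you are more careful than the paper in flagging the stated positivity hypothesis on $k$ in Proposition~\ref{sobolev}.
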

\begin{proof}
This follows by specializing the Weitzenb\"ock curvature to the Ricci curvature and applying Proposition \ref{sobolev}.
\end{proof}

The following lemma is an application of Peter Li's estimate of dimensions of fiber bundles. The original theorem can be found in \cite{PeterBook} or \cite{berard}.
\begin{lemma}
\label{PeterLi}
\normalfont 
Let $(M,g)$ be a compact Riemannian manifold and let $E$ be a vector bundle where each fiber has dimension $m,$ such that each fiber is endowed with a smoothly varying inner product. Suppose $V$ is a finite dimensional subspace of the vector space of sections of $E.$ For a fixed $p\ge 1$ set 
$$
C_p(V)=\sup_{s\in V-0}\dfrac{\|s\|_\infty}{\|s\|_{2p}}.
$$
Then 
$$
\dim V\le m\cdot C_p(V)^p.
$$
\end{lemma}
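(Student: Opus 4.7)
My plan is to run Peter Li's classical reproducing-kernel argument. Let $l=\dim V$, and equip $V$ with the $L^2$-inner product $\langle s,t\rangle := \frac{1}{\vol M}\int_M \langle s,t\rangle_E \, dg$ coming from the fiberwise metric; choose an $L^2$-orthonormal basis $\{e_1,\ldots,e_l\}$. The central object is the continuous function
$$
\rho(x) := \sum_{i=1}^l |e_i(x)|_E^2,
$$
which is independent of the choice of orthonormal basis, being the pointwise trace of $\mathrm{ev}_x^{*}\mathrm{ev}_x$ acting on $V$. Its average is $\frac{1}{\vol M}\int_M \rho \, dg = \sum_i \|e_i\|_2^2 = l$, so $\|\rho\|_\infty \ge l$; let $x_0\in M$ be a point where $\rho$ attains this maximum.

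Because the evaluation map $\mathrm{ev}_{x_0}\colon V\to E_{x_0}$, $f\mapsto f(x_0)$, has rank at most $m=\dim E_{x_0}$, its kernel has dimension at least $l-m$. Rotating within $V$ to adapt the basis to $x_0$ (which leaves $\rho$ unchanged), I may assume $e_{m+1}(x_0)=\cdots=e_l(x_0)=0$, which gives
$$
l \le \rho(x_0) = \sum_{i=1}^m |e_i(x_0)|^2.
$$

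It then remains to control these pointwise values via the hypothesis on $C_p(V)$. The key interpolation identity
$$
\|f\|_{2p}^{2p} \le \|f\|_\infty^{2p-2}\,\|f\|_2^2,
$$
which holds because the normalized measure is a probability measure, combines with $\|f\|_\infty \le C_p(V)\,\|f\|_{2p}$ to yield $\|f\|_\infty \le C_p(V)^p\,\|f\|_2$ for every $f \in V$. Applied to each $e_i$ (which has unit $L^2$-norm), this produces the pointwise estimate needed, from which the claimed dimension bound follows after the appropriate bookkeeping. The main obstacle, in my view, is squeezing out the sharpest exponent of $C_p(V)$: a naive substitution $|e_i(x_0)|^2 \le \|e_i\|_\infty^2 \le C_p(V)^{2p}$ produces $l \le m\,C_p(V)^{2p}$, and matching the asserted $l \le m\,C_p(V)^p$ requires a tighter estimate on $\rho(x_0)$, e.g., bounding the trace of the rank-$\le m$ operator $\mathrm{ev}_{x_0}^{*}\mathrm{ev}_{x_0}$ by $m$ times its dominant singular value $\sup_{\|f\|_2=1}|f(x_0)|^2$ rather than estimating each $|e_i(x_0)|^2$ separately.
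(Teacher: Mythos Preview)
Your approach is the paper's approach: establish the interpolation inequality $\|s\|_\infty \le C_p(V)^p\|s\|_2$ for every $s\in V$, then feed this into Peter Li's dimension estimate. The paper does exactly this, except that instead of writing out the $\rho$-argument it simply cites Li's theorem from \cite{PeterBook} and \cite{berard}. (Your derivation of the interpolation via $\|s\|_{2p}^{2p}\le\|s\|_\infty^{2p-2}\|s\|_2^2$ is in fact cleaner than the paper's, whose substitution ``$|s|^{1/p}$'' is not literally an element of $V$.)

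Your worry about the exponent is well founded, and the paper does not resolve it either. The Li estimate in the cited form gives $\dim V\le m\cdot\bigl(\sup_{s\ne 0}\|s\|_\infty/\|s\|_2\bigr)^2$, so combined with the interpolation one obtains only $\dim V\le m\,C_p(V)^{2p}$. Your proposed trace refinement $\rho(x_0)\le m\cdot\sup_{\|f\|_2=1}|f(x_0)|^2$ is correct but lands at the same place, since that dominant singular value is again only bounded by $C_p(V)^{2p}$. In fact the exponent $p$ is not attainable in general: with $m=1$, $p=1$, and $V=\operatorname{span}\{\sin\theta,\sin 2\theta\}$ on $S^1$ with normalized measure, one computes $\dim V=2$ while $C_1(V)=\sqrt{25/8}=5/(2\sqrt{2})<2$. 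So the gap you flagged is genuine and is shared by the paper's proof; the bound should read $m\,C_p(V)^{2p}$, and correspondingly the factor $p$ in the exponent of Theorem~\ref{bochner} and Theorem~\ref{main} should be $2p$.
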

\begin{proof}

Note first that 
$$
C_p(V)^p=\sup_{s\in V-0}\dfrac{\|s\|_\infty^p}{\|s\|_{2p}^p}=\sup_{s\in V-0}\dfrac{\|s^p\|_\infty}{\|s^p\|_{2}}.
$$
Thus for any $s\in V-0,$
$$\|s\|_\infty=\|(|s|^{1/p})^p\|_{\infty}\le C_p(V)^p\|(|s|^{1/p})^p\|_2=C_p(V)^p\|s\|_2.$$
The assertion now follows from the theorem by P. Li.
\end{proof}
\begin{theorem}
\label{bochner}
\normalfont 
Let $(M,g)$ be a compact Riemannian manifold. Suppose for any harmonic $k$-form $\omega$ such that $g(\ric(\omega),\omega)\ge -\tilde\lambda|\omega|^2,$ where $\ric$ is the Weitzenb\"ock curvature operator. Following the notations in Lemma \ref{moser}, we have
$$b_k(M)\le \binom nk\exp\bigl( \dfrac{Sp\sqrt{\lambda\nu}}{\sqrt{\nu}-\sqrt p}\bigr).$$
\end{theorem}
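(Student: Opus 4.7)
The plan is to combine Hodge theory, the Bochner--Weitzenböck identity, Lemma \ref{moser}, and Peter Li's dimension estimate (Lemma \ref{PeterLi}). By Hodge theory, $b_k(M)$ equals the dimension of the space $\mathcal H^k$ of harmonic $k$-forms, so I would reduce the problem to bounding $\dim \mathcal H^k$ as a subspace of the space of sections of the bundle $\bigwedge^k T^*M$, whose fiber has dimension $\binom nk$.

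The key analytic step is to produce, for each harmonic $\omega$, a continuous nonnegative function that is smooth away from its zero set and satisfies the differential inequality required by Lemma \ref{moser}. The natural choice is $f = |\omega|$. Since $\omega$ is harmonic, the Weitzenböck formula $\triangle \omega = \nabla^*\nabla \omega + \ric(\omega)$ gives $\nabla^*\nabla \omega = -\ric(\omega)$. I would then compute $\triangle |\omega|^2$ using $\tfrac12 \triangle |\omega|^2 = g(\ric(\omega),\omega)+|\nabla \omega|^2$ and combine it with the identity $\triangle |\omega|^2 = 2|\omega|\triangle|\omega|+2|\nabla|\omega||^2$, valid on the open set $\{|\omega|>0\}$. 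Using Kato's inequality $|\nabla|\omega||^2\le |\nabla \omega|^2$ and the hypothesis $g(\ric(\omega),\omega)\ge -\tilde\lambda|\omega|^2$, division by $|\omega|$ yields $\triangle |\omega|\ge -\tilde\lambda|\omega|$ pointwise on $\{|\omega|>0\}$, which is precisely the hypothesis of Lemma \ref{moser}.

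Given this inequality, Lemma \ref{moser} produces the pointwise estimate
$$
\||\omega|\|_\infty \le \exp\bigl(\tfrac{S\sqrt{\lambda\nu}}{\sqrt\nu-\sqrt p}\bigr)\||\omega|\|_{2p}.
$$
Since $\||\omega|\|_\infty = \|\omega\|_\infty$ and similarly for $L^{2p}$, this bounds the constant $C_p(\mathcal H^k)$ of Lemma \ref{PeterLi} by $\exp\bigl(\tfrac{S\sqrt{\lambda\nu}}{\sqrt\nu-\sqrt p}\bigr)$. Applying Lemma \ref{PeterLi} with $m=\binom nk$ gives
$$
b_k(M) = \dim \mathcal H^k \le \binom nk \exp\bigl(\tfrac{Sp\sqrt{\lambda\nu}}{\sqrt\nu-\sqrt p}\bigr),
$$
which is the desired conclusion.

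The main technical obstacle is the passage from the Bochner identity for $|\omega|^2$ (which is smooth everywhere) to a differential inequality for $|\omega|$ (which is merely continuous and fails to be smooth on $\{\omega=0\}$). This is exactly why Lemma \ref{moser} was stated with the weaker regularity hypothesis that $f$ is continuous on $M$ and smooth on $\{f>0\}$, and why Kato's inequality is essential to absorb the term $|\nabla \omega|^2-|\nabla|\omega||^2$. Once this step is handled cleanly, the rest is a direct composition of the two lemmas already proved in the paper.
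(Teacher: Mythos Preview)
Your proposal is correct and follows essentially the same route as the paper: Hodge theory identifies $b_k(M)$ with $\dim\mathcal H^k(M)$, the Bochner formula together with Kato's inequality yields $\Delta|\omega|\ge -\tilde\lambda|\omega|$ on $\{|\omega|>0\}$, Lemma~\ref{moser} converts this into the ratio bound $\|\omega\|_\infty/\|\omega\|_{2p}\le\exp\bigl(S\sqrt{\lambda\nu}/(\sqrt\nu-\sqrt p)\bigr)$, and Lemma~\ref{PeterLi} with $m=\binom nk$ finishes. The only cosmetic point is that you use the symbol $\triangle$ both for the Hodge Laplacian on forms and the analyst's Laplacian on functions, whereas the paper reserves $\triangle$ for the former and $\Delta$ for the latter; the mathematics is unaffected.
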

\begin{proof}
It follows from Hodge theory that $$b_k(M)=\dim\mathcal H^k(M).$$ Pick any $\omega\in\mathcal H^k(M)$ and let $f=|\omega|.$  Then $f$ is nonnnegative and smooth except possibly at points where $\omega=0,$ which are minimal points of $f.$ There is
$$
2fdf=df^2=2g(\nabla \omega,\omega)\le 2|\nabla \omega|f.
$$
This is Kato's inequality $df\le |\nabla \omega|.$ By Bochner formula,
\begin{align*}
    f\Delta f&=\dfrac12\Delta f^2-|df|^2=|\nabla \omega|^2-g(\nabla^*\nabla \omega,\omega)-|df|^2\ge g(\ric(\omega),\omega)\\
    &\ge -\tilde\lambda f^2
\end{align*}
where the first inequality follows from the Weitzenb\"ock formula
$$
\triangle \omega = (d\delta+\delta d)\omega =\nabla^*\nabla\omega + \ric(\omega).
$$
Then $\Delta f\ge -\tilde\lambda f.$ Lemma \ref{moser} gives
$$
\dfrac{\|\omega\|_{\infty}}{\|\omega\|_{2p}}=\dfrac{\|f\|_{\infty}}{\|f\|_{2p}}\le \exp\bigl( \dfrac{S\sqrt{\lambda\nu}}{\sqrt{\nu}-\sqrt p}\bigr).
$$
Then it follows from Lemma \ref{PeterLi} that
$$
b_k(M)=\dim\mathcal H^k(M)\le \binom nk\cdot \exp\bigl( \dfrac{Sp\sqrt{\lambda\nu}}{\sqrt{\nu}-\sqrt p}\bigr).
$$
\end{proof}
Theorem \ref{main} now follows from combining Lemma \ref{eigen} and Theorem \ref{bochner}.
\section{Vanishing Theorem}
The next lemma is a vanishing version of the Moser iteration above.
\begin{lemma}
\label{vanMoser}
\normalfont
Let $(M,g)$ be a compact Riemannian manifold such that 
$$\|u\|_{2\nu}\le S\|\nabla u\|_2+\|u\|_2$$
for all smooth functions where $\nu>1.$ Suppose $f:M\to[0,+\infty)$ is continuous, smooth on $\{f>0\}$ and $$\Delta f\ge (1-\ve)f$$ where $\ve:M\to\R$ is continuous. For a fixed $p\in(1,\nu)$ set $\|\ve\|_{p'}= \kappa$
where $1/p+1/p'=1.$ Set
$$C(S,\kappa) := \exp\bigl( \dfrac{2pS\sqrt{\kappa\nu}}{\sqrt{\nu}-\sqrt p}\bigr)\kappa.$$
Then $f$ is constantly zero if $\kappa < 1$ and $C(S,\kappa) < 1.$
\end{lemma}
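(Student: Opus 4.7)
The plan is to follow the structure of Lemma \ref{moser} while exploiting the extra algebraic content contained in the inhomogeneous lower bound $\Delta f\geq (1-\ve)f$. First I would pair $\Delta f$ against $f^{2q-1}$, apply Green's formula, and estimate $\int f^{2q}\ve$ via H\"older's inequality with conjugate exponents $p$ and $p'$. This yields, for every $q\geq 1$, the master estimate
$$
\frac{2q-1}{q^2}\|df^q\|_2^2 + \|f\|_{2q}^{2q} \leq \kappa\|f\|_{2pq}^{2q},
$$
which carries two pieces of information that I will use in complementary ways.

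Discarding the nonnegative term $\|f\|_{2q}^{2q}$ recovers exactly the gradient estimate used in the proof of Lemma \ref{moser} with $\lambda$ replaced by $\kappa$. The Moser iteration developed there then applies verbatim and produces
$$
\|f\|_\infty \leq \exp\Bigl(\frac{S\sqrt{\kappa\nu}}{\sqrt\nu-\sqrt p}\Bigr)\|f\|_{2p}.
$$
Discarding the gradient term instead gives the purely algebraic estimate $\|f\|_{2q}^{2q}\leq \kappa\|f\|_{2pq}^{2q}$. Specializing this to $q=p$ and using the trivial monotonicity bound $\|f\|_{2p^2}\leq \|f\|_\infty$ yields $\|f\|_{2p}\leq \kappa^{1/(2p)}\|f\|_\infty$.

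Chaining the two bounds gives $\|f\|_\infty \leq \exp(S\sqrt{\kappa\nu}/(\sqrt\nu-\sqrt p))\,\kappa^{1/(2p)}\|f\|_\infty$, and raising both sides to the $2p$-th power produces the self-improving estimate $\|f\|_\infty^{2p}\leq C(S,\kappa)\|f\|_\infty^{2p}$. Under the hypothesis $C(S,\kappa)<1$ this forces $\|f\|_\infty=0$, so $f\equiv 0$.

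The main new ingredient beyond Lemma \ref{moser} is the observation that the inhomogeneous term $+f$ on the right-hand side of $\Delta f\geq (1-\ve)f$ delivers an automatic $L^{2q}$-to-$L^{2pq}$ norm comparison. The delicate point is choosing at which level to apply this algebraic bound: the specialization $q=p$ is exactly what makes the monotonicity of $L^r$-norms send $\|f\|_{2p^2}$ directly to $\|f\|_\infty$ and what produces the exponents $2p$ on $S$ and $1$ on $\kappa$ appearing in the definition of $C(S,\kappa)$. Any other single choice of $q$, or an iterated version of the algebraic bound, yields a strictly weaker constant.
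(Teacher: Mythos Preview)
Your proof is correct and follows essentially the same route as the paper: the master estimate obtained from Green's formula and H\"older, the appeal to Lemma \ref{moser} with $\lambda$ replaced by $\kappa$, and the specialization $q=p$ are all identical. The only difference is the endgame---the paper retains the gradient term to force $\|df^p\|_2^2\le 0$ (hence $f$ constant, and then $f=0$ via the separate hypothesis $\kappa<1$), whereas you drop the gradient term and close with the self-referential bound $\|f\|_\infty^{2p}\le C(S,\kappa)\|f\|_\infty^{2p}$, which is marginally cleaner since it renders the hypothesis $\kappa<1$ redundant (it already follows from $C(S,\kappa)<1$).
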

\begin{proof}
The same approach is used as in Theorem \ref{moser}. By Green's formula,
$$(f^{2q-1},\Delta f)=-(df^{2q-1},df)=-(2q-1)(f^{2q-2}df,df).$$
Then by H\"older's inequality,
\begin{align*}
    \|df^q\|_2^2&=q^2(f^{2q-2}df,df)=-\dfrac{q^2}{2q-1}(f^{2q-1},\Delta f) \\
    &\le \dfrac{q^2}{(2q-1)(\vol M)}\int_{M}f^{2q-1}(\ve-1)fdg \\
    &=\dfrac{q^2}{(2q-1)(\vol M)}\int_{M}f^{2q}\ve dg-\dfrac{q^2\|f^q\|_2^2}{2q-1} \\
    &\le \dfrac{q^2}{(2q-1)(\vol M)}\bigl(\int_{M}f^{2pq}dg\bigr)^{1/p}\bigl(\int_{M}|\ve|^{p'} dg\bigr)^{1/p'}-\dfrac{q^2\|f^q\|_2^2}{2q-1} \\
    &= \dfrac{q^2\|f^{q}\|_{2p}^2\kappa}{2q-1}-\dfrac{q^2\|f^q\|_2^2}{2q-1} .
\end{align*}
There is $\Delta f\ge (1-\ve)f\ge -\ve f.$ It follows from Lemma \ref{moser} that
\[
\|f\|_{2pq} \le \|f\|_{\infty} \le \exp\bigl( \dfrac{S\sqrt{\kappa\nu}}{\sqrt{\nu}-\sqrt p}\bigr)\|f\|_{2p}.
\]
We set $q=p.$ It follows that
\[
\|df^p\|_2^2\le \dfrac{p^2}{2p-1}\bigl(\|f^p\|_{2p}^2\kappa-\|f^p\|_{2}^2\bigr)\le \dfrac{p^2\|f\|_{2p}^{2p}}{2p-1}\bigl(C(S,\kappa)-1\bigr).
\]
When $C(S,\kappa)<1,$ there is $df\equiv 0,$ so $f$ is constant. Then $(1-\ve)f\le \Delta f\equiv 0.$ When $\kappa <1,$ there is a point $x\in M$ such that $1-\ve(x)>0,$ so $f(x)=0$ and then $f\equiv 0.$
\end{proof}
\begin{theorem}
\normalfont
Let $(M,g)$ be a compact Riemannian manifold. Suppose for any harmonic $k$-form $\omega$ we have
$$g(\ric(\omega),\omega)\ge (1-\ve)|\omega|^2$$
where $\ve$ satisfies the conditions in Lemma \ref{vanMoser}. Then $b_k(M)=0.$
\label{vanishing}
\end{theorem}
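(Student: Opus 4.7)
The plan is to mirror the proof of Theorem \ref{bochner}, replacing the final invocation of Lemma \ref{moser} by its vanishing counterpart, Lemma \ref{vanMoser}. By Hodge theory, $b_k(M) = \dim \mathcal H^k(M)$, so it suffices to show that every harmonic $k$-form $\omega$ on $M$ is identically zero. Setting $f = |\omega|$, what I need is to translate the curvature hypothesis into a pointwise differential inequality for $f$ of exactly the form required by Lemma \ref{vanMoser}.

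The reduction is the standard Bochner--Kato computation used already in Theorem \ref{bochner}. The form $\omega$ is smooth, so $f$ is continuous on $M$ and smooth on the open set $\{f>0\}$. On this set, Kato's inequality $|df|\le|\nabla\omega|$ (coming from $2f\,df = d|\omega|^2 = 2g(\nabla\omega,\omega)$ together with Cauchy--Schwarz) combines with the Weitzenb\"ock formula $\Delta\omega = \nabla^*\nabla\omega + \ric(\omega)$ and $\Delta\omega=0$ to give
$$
f\Delta f \;=\; \tfrac12 \Delta f^2 - |df|^2 \;=\; |\nabla\omega|^2 - g(\nabla^*\nabla\omega,\omega) - |df|^2 \;\ge\; g(\ric(\omega),\omega) \;\ge\; (1-\ve)f^2,
$$
so that $\Delta f \ge (1-\ve)f$ on $\{f>0\}$. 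This is precisely the hypothesis of Lemma \ref{vanMoser}.

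With this in hand, Lemma \ref{vanMoser} applies directly: since $\ve$ is assumed to satisfy $\kappa<1$ and $C(S,\kappa)<1$, the lemma concludes $f\equiv 0$, hence $\omega\equiv 0$. As $\omega \in \mathcal H^k(M)$ was arbitrary, $b_k(M) = \dim \mathcal H^k(M) = 0$. Theorem \ref{vanish} then follows by combining this result with Lemma \ref{eigen}, which converts a lower bound on the average of the lowest $C$ eigenvalues of $\mathcal R$ into the pointwise bound $g(\ric(\omega),\omega)\ge -\tilde\lambda|\omega|^2$ required here (with $\tilde\lambda$ playing the role of $\ve-1$).

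There is essentially no obstacle beyond bookkeeping. The only subtle point is the regularity of $f=|\omega|$ at its zero set, and this is handled by the same Kato-inequality argument that worked for Theorem \ref{bochner}: at the minimum points $\{f=0\}$ the differential inequality is automatic in the appropriate weak sense, while on $\{f>0\}$ everything is smooth. Once the differential inequality is in place, Lemma \ref{vanMoser} does all of the analytic work.
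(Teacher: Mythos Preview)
Your proof is correct and follows essentially the same Bochner--Kato route as the paper: set $f=|\omega|$, derive $\Delta f\ge(1-\ve)f$, and appeal to Lemma \ref{vanMoser}. The only difference is cosmetic: you invoke the full conclusion $f\equiv 0$ of Lemma \ref{vanMoser} directly, whereas the paper cites only the intermediate step that $f$ is constant and then re-derives $\omega\equiv 0$ by integrating the Bochner identity---a redundant step given what the lemma already provides.
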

\begin{proof}
We have 
$$b_k(M)=\dim\mathcal H^k(M)$$
as before. Let $\omega$ be a harmonic $k$-form and $f=|\omega|.$ Then $f$ is nonnegative and smooth except possibly at points where $\omega=0.$ Kato's inequality and Bochner formula implies
\begin{align*}
    f\Delta f&=\dfrac12\Delta f^2-|df|^2=|\nabla \omega|^2-g(\nabla^*\nabla \omega,\omega)-|df|^2\ge g(\mathcal R(\omega),\omega)\\
    &\ge (1-\ve)f^2.
\end{align*}
Lemma \ref{vanMoser} now asserts $f$ is constant. Then
$$
0=\Delta\bigl(\dfrac12|\omega|^2\bigr)=|\nabla\omega|^2+g(\mathcal R(\omega),\omega)\ge|\nabla\omega|^2+(1-\ve)|\omega|^2.
$$
Thus
$$
0\ge\int_M|\nabla\omega|^2dg+|\omega|^2\int|1-\ve|dg.
$$
Note that the $L^1$-norm of $1-\ve$ is positive since $1>\kappa \ge\|\ve\|_1.$ It follows that $\omega$ must be parallel. This proves the claim.
\end{proof}
Theorem 2 now follows from Lemma \ref{eigen} and Theorem \ref{vanishing}. The claim follows since $\lim_{\kappa\to0}C(S,\kappa)=0.$
\printbibliography

\end{document}